\documentclass{jT}
\usepackage{amssymb,amsmath}
\usepackage[english]{babel}
\usepackage{url}
\newcommand{\afrac}[2]{\genfrac{}{}{0pt}{}{#1}{#2}}
\newcommand{\Q}{\mathbf{Q}}
\newcommand{\R}{\mathbf{R}}

\newcommand{\Z}{\mathbf{Z}}
\newcommand{\Fp}{\mathbf{F}_p}
\newcommand{\Fl}{\mathbf{F}_\ell}
\newcommand{\F}{\mathbf{F}}
\newcommand{\fp}{\mathfrak{p}}
\newcommand{\glmodl}{{\rm GL}_2(\Fl)}
\newcommand{\pglmodl}{{\rm PGL}_2(\Fl)}
\newcommand{\pslmodl}{{\rm PSL}_2(\Fl)}
\newcommand{\rhobarl}{\rho_{E,\ell}}
\newcommand{\frob}{\varphi_\fp}
\newcommand{\frobbarl}{\varphi_{\fp,\ell}}
\newcommand{\kron}[2]{\left(\frac{#1}{#2}\right)}
\newcommand{\inkron}[2]{\genfrac {(}{)}{0.9pt}{}{#1}{#2}}
\newcommand{\leftvert}{\vert\hspace{0.3pt}}
\newcommand{\cO}{\mathcal{O}}
\def\disc{\operatorname{disc}}
\def\Gal{\operatorname{Gal}}
\def\Aut{\operatorname{Aut}}
\def\tr{\operatorname{tr}}
\def\det{\operatorname{det}}

\newtheorem*{thm1}{Theorem 1}
\newtheorem*{thm2}{Theorem 2}
\newtheorem{lemma}{Lemma}
\newtheorem{proposition}{Proposition}

\begin{document}

\title[A local-global principle for rational isogenies of prime degree]{A local-global principle for\\rational isogenies of prime degree}
\author{\sc Andrew V. Sutherland}
\address{Andrew V. Sutherland\\Department of Mathematics\\Massachusetts Institute of Technology\\77 Massachusetts Avenue\\Cambridge, MA  02139}
\email{drew@math.mit.edu}
\urladdr{\url{http://math.mit.edu/~drew}}
\subjclass[2010]{11G05}
\keywords{elliptic curve, isogeny, local-global principle}

\maketitle

\begin{resume}
Soit $K$ un corps de nombres. Nous \'etudions un principe local-global
pour les courbes elliptiques $E/K$ admettant ou non une isog\'enie
rationnelle de degr\'e premier $\ell$. Pour des corps $K$ convenables
(dont $K=\mathbb Q$), nous d\'emontrons ce principe pour tout $\ell\equiv
1\bmod 4$ et tout $\ell<7$ mais exhibons une courbe elliptique
d'invariant modulaire $2268945/128$ comme contre-exemple pour $\ell=7$.
Nous montrons alors qu'il s'agit du seul contre-exemple \`a isomorphisme
pr\`es lorsque $K=\mathbb Q$.
\end{resume}

\begin{abstr}
Let $K$ be a number field.  We consider a local-global principle for elliptic curves $E/K$ that admit
(or do not admit) a rational isogeny of prime degree $\ell$.
For suitable $K$ (including $K=\Q$), we prove that this principle holds for all $\ell\equiv 1\bmod 4$, and for $\ell < 7$, 
but find a counterexample when $\ell=7$ for an elliptic curve with $j$-invariant $2268945/128$.
For $K=\Q$ we show that, up to isomorphism, this is the only counterexample.
\end{abstr}

\section*{Introduction}

Let $E$ be a non-singular elliptic curve defined over a number field $K$, and let~$\ell$ be a prime number.
We say that $E$ admits an $\ell$-isogeny over $K$ if there is an isogeny $\phi\colon E\to E'$ of degree $\ell$
defined over $K$.  If $\fp$ is a prime of~$K$ where $E$ has good reduction, we say that $E$ admits
an $\ell$-isogeny locally at~$\fp$ if the reduction of $E$ modulo $\fp$ admits an isogeny of degree $\ell$
defined over the residue field.

If $E$ admits an $\ell$-isogeny over $K$, then $E$ necessarily admits an $\ell$-isogeny locally everywhere,
that is, at every prime of good reduction.  We ask the converse:

\begin{center}
\emph{If $E$ admits an $\ell$-isogeny locally everywhere,\\ must $E$ admit an $\ell$-isogeny over $K$?}
\end{center}

We can immediately answer this question with a counterexample.
Consider the elliptic curve $E/\Q$ defined by the Weierstrass equation
\begin{equation}\label{E2450A}
y^2 + xy = x^3 - x^2 - 107x - 379.
\end{equation}
This curve admits a $7$-isogeny locally at every prime of good reduction (and over $\R$) but it does not admit a $7$-isogeny over~$\Q$.
We note that $E$ does admit a 7-isogeny over a quadratic extension of~$\Q$.
Our first theorem implies that this is necessarily the case, as is the fact that we used a prime $\ell\equiv 3 \bmod 4$.

\begin{thm1}
Assume $\sqrt{\inkron{-1}{\ell}\ell}\notin K$, and suppose $E/K$ admits an $\ell$-isogeny locally at a set of primes with density one.
Then~$E$ admits an $\ell$-isogeny over a quadratic extension of $K$, and if $\ell\equiv 1 \bmod 4$ or $\ell < 7$, then $E$ admits an $\ell$-isogeny over~$K$.
\end{thm1}

Whether an elliptic curve  admits an $\ell$-isogeny over $K$ (or not) depends only on its
$j$-invariant $j(E)$, which uniquely identifies its isomorphism class over any algebraic
closure of~$K$.
Let us call a pair $(\ell,j(E))$ exceptional if $E/K$ admits an
$\ell$-isogeny locally everywhere, but not over~$K$.
The pair $(7,\ 2268945/128)$ corresponds to the counterexample above.
\mbox{Theorem}~1 admits the possibility
of infinitely many exceptional pairs, and this occurs, for example,
when $K=\Q(i)$.  However, it does not happen when $K=\Q$.

\begin{thm2}
The pair $(7,\ 2268945/128)$ is the only exceptional pair for $\Q$.
\end{thm2}

The analogous local-global question for $\ell$-torsion was addressed by Katz in~\cite{Katz:TorsionPoints}.
In general, an elliptic curve $E/K$ with non-trivial $\ell$-torsion locally everywhere may have
trivial $\ell$-torsion over $K$.  However, Katz shows that such an $E$ must be
rationally isogenous to an elliptic curve which has non-trivial $\ell$-torsion over $K$.  He proves this
by reducing the problem to a purely group-theoretic statement, an approach advocated by Mazur \cite[p.~483]{Katz:TorsionPoints}.

We take a similar line in our treatment of Theorem 1, using a Galois representation to reduce the problem
to a question regarding the structure of certain subgroups of $\glmodl$, which we are then able to address
with purely elementary methods.  The proof of Theorem 2 requires more, and here we also use the theory of complex multiplication and,
crucially, a theorem of Parent~\cite{Parent:Triviality} that characterizes the rational points on the modular
curve $X_0^+(\ell^2)(\Q)$ for certain values of $\ell$.

\section{Preliminaries}

\subsection{Galois Representations}
We follow the notation in \cite{Mazur:GaloisRepresentations}.  Let us fix a number field~$K$
with algebraic closure $\overline{K}$.  If $S$ is a finite set of non-archimedean primes of~$K$,
let $\overline{K}_S$ denote the maximal algebraic extension of $K$ in $\overline{K}$ unramified outside of~$S$.
Given an elliptic curve $E/K$ and a prime number $\ell$, the absolute Galois group $G_K=\Gal(\overline{K}/K)$
acts on the group of $\ell$-torsion points $E[\ell]$.  Provided $S$ contains the primes that divide~$\ell$, and
the primes where $E$ has bad reduction, this action factors through $G_{K,S}=\Gal(\overline{K}_S/K)$, yielding a representation
\[
\rhobarl\colon G_{K,S}\rightarrow\Aut(E[\ell])\cong{\rm GL}_2(\Z/\ell\Z)\cong \glmodl.
\]

If $\fp$ is a non-archimedean prime of $K$ not in $S$ with residue field $k_\fp$, let~$\frob$ denote the
$\leftvert k_\fp\vert$-power Frobenius automorphism, which we may view as an element of $G_{K,S}$ with the understanding that it
is determined only up to conjugacy.  We identify the image of $\frob$ under $\rhobarl$ as a 
conjugacy class $\frobbarl$ of $\glmodl$, and we have
\[
\det(\frobbarl)\equiv \leftvert k_\fp\vert \bmod\ell,\qquad \tr(\frobbarl)\equiv \leftvert k_\fp\vert+1-\leftvert E(k_\fp)\vert\bmod \ell.
\]
The Chebotarev density theorem implies that each conjugacy class $\frobbarl$ in~$G$ arises for a set of primes with density $|\frobbarl|/|G| > 0$.

\subsection{Subgroups of $\glmodl$}
We recall some of the classical subgroups of $\glmodl$.
%A \emph{Borel} subgroup is any conjugate of the group $B$ of upper diagonal matrices.
A \emph{Cartan} subgroup is an absolutely semisimple maximal abelian subgroup, of which there are two types.
A \emph{split} Cartan subgroup is isomorphic to $\F_\ell^*\times\F_\ell^*$ and
conjugate to the group of diagonal matrices.  A \emph{non-split} Cartan subgroup is isomorphic to $\F_{\ell^2}^*$
and conjugate to a group $C_{ns}$ we may define as follows:
for $\ell=2$ let $C_{ns}$ be the unique subgroup of order 3, and for $\ell>2$ fix a quadratic non-residue $\delta\in\Fl^*$
and let
\[
C_{ns} = \left\{\left(\afrac{x}{y}\thinspace\afrac{\delta y}{x}\right):x,y\in\Fl, (x,y)\ne(0,0)\right\}.
\]
Every Cartan subgroup has index 2 in its normalizer and contains the group of scalar matrices.

The semisimple elements of $\glmodl$ are those of order prime to $\ell$.  The following proposition
characterizes the semisimple subgroups of $\glmodl$ in terms of their images in $\pglmodl$.

\begin{proposition}\label{prop:SSS}
Let $G$ be a subgroup of $\glmodl$ of order prime to $\ell$, and let $H$ be the image of $G$ in $\pglmodl$.
Then exactly one of the following holds:
\begin{enumerate}
\item[(a)]
$H$ is cyclic and $G$ lies in a Cartan subgroup;
\item[(b)]
$H$ is dihedral and $G$ lies in the normalizer of a Cartan subgroup but not in the Cartan subgroup itself;
\item[(c)]
$H$ is isomorphic to $A_4$, $S_4$, or $A_5$.
\end{enumerate}
Here $A_n$ and $S_n$ denote the alternating and symmetric groups on $n$ letters, respectively.
\end{proposition}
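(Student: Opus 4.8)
The plan is to establish the trichotomy first for the image $H\subseteq\pglmodl$, and then to lift each of the three cases to the asserted statement about $G$ inside $\glmodl$; mutual exclusivity will then be essentially automatic.

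The one structural fact doing all the work is that, since $|G|$ is prime to $\ell$, every element of $G$ is semisimple, and hence so is every element of $H$, viewed in $\mathrm{PGL}_2(\overline{\F_\ell})$. Thus each non-identity element of $H$ is the image of a non-scalar semisimple matrix, which is diagonalizable over $\overline{\F_\ell}$ with two distinct eigenvalues, and so has exactly two fixed points on $\mathbf{P}^1(\overline{\F_\ell})$; moreover the stabilizer in $H$ of any point of $\mathbf{P}^1(\overline{\F_\ell})$ lies in a Borel, i.e. in a conjugate of the upper-triangular matrices modulo scalars, whose subgroups of order prime to $\ell$ inject into $\overline{\F_\ell}^{\,*}$ and are therefore cyclic. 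With these two facts one runs the classical orbit-counting argument: decomposing the set of points of $\mathbf{P}^1(\overline{\F_\ell})$ fixed by some non-identity element of $H$ into $H$-orbits $O_1,\dots,O_k$ with point-stabilizer orders $n_1,\dots,n_k$, double-counting the pairs $(\bar h,P)$ with $\bar h\neq 1$ and $\bar hP=P$ gives $\sum_i(1-1/n_i)=2(1-1/|H|)$, and an elementary analysis (in which $k$ is forced to be $2$ or $3$) pins down $(n_1,\dots,n_k)$, hence $|H|$ and the isomorphism type of $H$, to be one of: $H$ cyclic; $H$ dihedral; or $H$ of order $12$, $24$, or $60$, in which cases $H\cong A_4,S_4,A_5$ respectively. (Alternatively one may simply invoke Dickson's classification of the finite subgroups of $\mathrm{PGL}_2$ of order prime to the characteristic.)

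Next I would prove the equivalence ``$H$ cyclic $\iff$ $G$ lies in a Cartan subgroup.'' If $G\subseteq C$ for a Cartan $C$, then the image of $G$ in $\pglmodl$ lies in $C/Z$, which is cyclic of order $\ell-1$ or $\ell+1$, so $H$ is cyclic. Conversely, if $H$ is cyclic then, since $Z_0:=G\cap Z$ is central with cyclic quotient $G/Z_0\cong H$, the group $G$ is abelian; if $G$ consists only of scalars it lies in every Cartan, and otherwise $G$ contains a non-scalar element $g$, whose centralizer $C_{\glmodl}(g)=\F_\ell[g]^{*}$ is a split or non-split Cartan subgroup according as the characteristic polynomial of $g$ does or does not split over $\F_\ell$, and $G\subseteq C_{\glmodl}(g)$ because $G$ is abelian. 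This gives case (a), and shows (a) holds precisely when $H$ is cyclic. For the dihedral case, if $H$ is dihedral of order $2n\geq 4$, choose a normal cyclic subgroup $\bar A\trianglelefteq H$ of order $n\geq 2$ and let $A\trianglelefteq G$ be its preimage; by the same argument $A$ is abelian, contains a non-scalar element $a$, and lies in the unique Cartan $C:=C_{\glmodl}(a)$ containing it (a Cartan being a maximal abelian subgroup). For any $\sigma\in G$ the conjugate $\sigma C\sigma^{-1}=C_{\glmodl}(\sigma a\sigma^{-1})$ is again the unique Cartan containing $A$, hence equals $C$, so $G\subseteq N(C)$; and $G\not\subseteq C$ because $H$ is not cyclic. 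This is case (b), and in the one remaining situation $H\cong A_4,S_4$ or $A_5$, which is case (c).

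Finally the three cases are mutually exclusive: a cyclic group is not a noncyclic dihedral group, none of $A_4,S_4,A_5$ is cyclic or dihedral (they contain no element of order $6$, $12$, $30$ respectively), and these three are pairwise non-isomorphic by order; so exactly one of (a), (b), (c) holds. The step I expect to be the main obstacle is the classification of $H$ itself: the orbit count is where the prime-to-$\ell$ hypothesis is genuinely used — to get exactly two fixed points and cyclic point-stabilizers — and the cases $(n_1,n_2,n_3)=(2,3,n)$ with $n\in\{3,4,5\}$ still require identifying the resulting group of order $12$, $24$, or $60$, via its action on the poles, as $A_4$, $S_4$, or $A_5$. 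That identification is routine but is the one place needing genuine bookkeeping rather than a one-line argument; everything else is linear algebra over $\F_\ell$ together with the elementary facts about Cartan subgroups recalled above.
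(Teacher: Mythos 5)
The paper does not actually prove Proposition~\ref{prop:SSS}; its ``proof'' is a citation to Lang~\cite[Thm.~XI.2.3]{Lang:ModularForms} or Serre~\cite{Serre:GaloisRepresentations}. Your proposal supplies a self-contained proof, and it is correct. The route you take --- classify $H$ first via the ``spherical triangle'' orbit-count $\sum_i(1-1/n_i)=2(1-1/|H|)$ over $\mathbf{P}^1(\bar{\F}_\ell)$, using prime-to-$\ell$ order to guarantee exactly two fixed points and cyclic point-stabilizers, and then lift to $G$ by observing that a non-scalar semisimple $g$ has centralizer $\F_\ell[g]^*$, which is the unique Cartan containing any abelian group through $g$ --- is essentially the classical Dickson classification as presented in the cited sources, so in substance it is the same argument the paper is pointing to rather than a new one. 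The lifting half (cyclic $H\Leftrightarrow G$ Cartan, dihedral $H\Rightarrow G\subseteq N(C)\setminus C$ via the normal cyclic subgroup's preimage) is clean and correct, including the observation that $G/(G\cap Z)$ cyclic forces $G$ abelian. The one step you leave as ``routine bookkeeping'' --- identifying the groups of order $12$, $24$, $60$ with branching data $(2,3,3)$, $(2,3,4)$, $(2,3,5)$ as $A_4$, $S_4$, $A_5$ --- is indeed the only nontrivial remaining work (e.g.\ for order $12$ one lets $H$ act faithfully on the $4$-point orbit to embed $H\hookrightarrow S_4$ with image the unique index-$2$ subgroup $A_4$; the other two are similar but require slightly more care), and you correctly flag it rather than glossing over it. In short: a correct, standard proof of a statement the paper outsources to the literature.
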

\begin{proof}
See \cite[Thm.~XI.2.3]{Lang:ModularForms} or \cite{Serre:GaloisRepresentations}.
\end{proof}

Let $\Omega$ denote the set of linear subspaces of $\Fl^2$.  The group $\glmodl$
acts on~$\Omega$, and the induced action of $\pglmodl$ is faithful, since only scalar matrices act trivially.
For an element or subgroup~$g$ of $\glmodl$ or $\pglmodl$,
we let $\Omega/g$ denote the set of $g$-orbits of $\Omega$, and define $\Omega^g$ to be
the set of elements fixed by~$g$.  We recall the orbit-counting lemma,
\[
|\Omega/G| = \frac{1}{|G|}\sum_{g\in G}|\Omega^g|,
\]
and understand that the size of each orbit in $\Omega/G$ must divide $|G|$.

We can use Proposition~\ref{prop:SSS} to characterize the action of each element of $\glmodl$ on $\Omega$.
This yields Proposition~\ref{prop:action}, which encapsulates the group-theoretic content of two results credited to Atkin \cite[\S 6]{Schoof:ECPointCounting2}.  For each $h$ in $\pglmodl$, let $\sigma(h)$
denote the sign of $h$ as a permutation of $\Omega$ (thus for $\ell>2$ we have $\sigma(h)=1$ if and only if $h\in\pslmodl$).

\begin{proposition}\label{prop:action}
Let $g\in\glmodl$ have image $h$ in $\pglmodl$ with order $r$, let $k=|\Omega^h|$, and let $s=|\Omega/h|$.
Then $k$ is $0, 1, 2$, or $\ell+1$, and the $s-k$ non-trivial $h$-orbits have size $r$.
When $\ell>2$ we also have $\sigma(h)=(-1)^s$.
\end{proposition}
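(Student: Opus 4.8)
The plan is to run through the $\glmodl$-conjugacy classes of $g$, using Proposition~\ref{prop:SSS} for the semisimple ones, and in each case read off the cycle type of the permutation that $h$ induces on $\Omega\cong\mathbb{P}^1(\Fl)$. I would begin by separating the case $\ell\mid r$ from the case $\ell\nmid r$. If $\ell\mid r$, then $g$ is not semisimple, so by rational canonical form it is $\glmodl$-conjugate to a scalar multiple of the unipotent matrix $\bigl(\begin{smallmatrix}1&1\\0&1\end{smallmatrix}\bigr)$; hence $r=\ell$, the matrix $g$ has a single eigenline, and it permutes the remaining $\ell$ lines in one orbit of size $\ell$ (translation on the affine line). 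Thus $k=1$, $s=2$, and the unique non-trivial orbit has size $\ell=r$. If $\ell\nmid r$, then $g$ is semisimple, so $\langle g\rangle$ has order prime to $\ell$ with cyclic image $\langle h\rangle$ in $\pglmodl$, and Proposition~\ref{prop:SSS}(a) places $g$ inside a Cartan subgroup.

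I would then split the Cartan case into the split and non-split subcases. If $g$ lies in a split Cartan, take $g=\mathrm{diag}(a,b)$: when $a=b$ the element $h$ is trivial, $r=1$ and $k=s=\ell+1$; when $a\ne b$, $g$ fixes precisely the two coordinate lines, so $k=2$, and on each remaining line $\langle(1,t)\rangle$ it acts by $t\mapsto(b/a)t$, so the $\ell-1$ such lines split into $(\ell-1)/r$ orbits of size $r$, giving $s=2+(\ell-1)/r$. If $g$ lies in a non-split Cartan $C_{ns}\cong\F_{\ell^2}^*$ and is non-scalar, its eigenvalues form a conjugate pair in $\F_{\ell^2}\setminus\Fl$, so $g$ fixes no line and $k=0$; identifying $\Omega$ with $\F_{\ell^2}^*/\Fl^*$ and the action of $g$ with translation by its class, all orbits have size $r$ and $s=(\ell+1)/r$. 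Collecting the cases, $k\in\{0,1,2,\ell+1\}$ always, and in every case the $s-k$ orbits that are not fixed points have size $r$, which is the first part of the statement.

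For the sign when $\ell>2$, I would invoke the elementary identity that a permutation of an $n$-element set with $c$ disjoint cycles — counting fixed points as $1$-cycles — has sign $(-1)^{n-c}$. With $n=|\Omega|=\ell+1$ and $c=s$ this gives $\sigma(h)=(-1)^{(\ell+1)-s}$, and since $\ell>2$ makes $\ell+1$ even, we obtain $\sigma(h)=(-1)^s$.

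The only step requiring any care — and hence the main (mild) obstacle — is the bookkeeping of orbit sizes in the split and non-split Cartan subcases, namely verifying that the multiplier $b/a\in\Fl^*$ (respectively the class of $g$ in $\F_{\ell^2}^*/\Fl^*$) has order exactly $r$. Both follow at once from the fact that the kernel of $\glmodl\to\pglmodl$ consists of the scalar matrices, so that the order of the image of a matrix lying in a Cartan subgroup equals the order of the induced multiplication map on the pencil of lines through the origin. With that in hand, the consistency relation $k+(s-k)r=\ell+1$ holds automatically in each case, confirming the count.
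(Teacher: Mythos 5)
Your proof is correct and follows the same overall skeleton as the paper's — both reduce to the dichotomy of Proposition~\ref{prop:SSS}(a) and split the semisimple case into split and non-split Cartan subgroups — but you execute two of the steps differently, and in both places your version is cleaner. First, where the paper derives $s$ in the Cartan cases from the orbit-counting lemma applied to the full cyclic group $\langle h\rangle$, you instead parametrize the action directly (multiplication by $b/a$ on the affine part of $\mathbb{P}^1(\Fl)$, resp.\ multiplication in $\F_{\ell^2}^*/\Fl^*$) and read off the cycle structure; this avoids the Burnside computation and makes the claim that the non-trivial orbits all have size exactly $r$ immediate rather than a divisibility-plus-sum argument. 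Second, and more substantively, your sign argument replaces the paper's parity case-split on $r$ with the single uniform identity that a permutation of an $n$-set with $c$ cycles (fixed points counted as $1$-cycles) has sign $(-1)^{n-c}$, so that $\sigma(h) = (-1)^{(\ell+1)-s}$ and the hypothesis $\ell>2$ enters transparently as ``$\ell+1$ is even.'' This handles all four cases (trivial, unipotent, split, non-split) in one stroke, whereas the paper repeats the parity check case by case. Your top-level split into $\ell\mid r$ versus $\ell\nmid r$ (treating the non-semisimple case via rational canonical form) is also a minor but legitimate reorganization of the paper's explicit $r=1$ / $r=\ell$ / otherwise breakdown; since the kernel of $\glmodl\to\pglmodl$ consists of scalars of order prime to $\ell$, your claim that $\ell\mid r$ forces $g$ non-semisimple is sound.
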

\begin{proof}
The proposition clearly holds for $\ell=2$, so we assume $\ell > 2$.
When $r=1$, we have $k=s=\ell+1$ and $\sigma(h)=1=(-1)^s$.
When $r=\ell$, there are exactly two $h$-orbits, of sizes $1$ and~$\ell$, and we have $k=1$, $s=2$,
and $\sigma(h) = 1 = (-1)^s$.  The proposition holds in both cases.

Otherwise the cyclic group $H=\langle h \rangle$ has order $r$ prime to $\ell$ and we are in case (a) of Proposition~\ref{prop:SSS}.  Thus $g$ lies in a Cartan subgroup $C$, whose image in $\pglmodl$ has order $\ell-1$ or $\ell+1$, as $C$ is split or non-split (resp.).  We consider the two cases.

If $C$ is split then $g$ is diagonalizable, so $k=|\Omega^h|=|\Omega^g|=2$, and the same is true for every non-trivial element of $H$. The orbit-counting lemma yields
\[
s = \frac{1}{r}\sum_{h'\in H}\left|\Omega^{h'}\right| = \frac{1}{r}\bigl((r-1)2+\ell+1\bigr) = 2 + \frac{\ell-1}{r}.
\]
The sizes of the $(\ell-1)/r$ non-trivial orbits all divide $r$ and sum to $\ell-1$, hence they are all equal to~$r$.
If $r$ is odd then $s$ is even and $\sigma(h)=1=(-1)^s$, and if $r$ is even then $\sigma(h)=(-1)^{s-2}=(-1)^s$.
Thus the proposition holds when $C$ is split.

If $C$ is non-split then $g$ has no eigenvalues in $\Fl$, hence $k=|\Omega^h|=|\Omega^g|=0$,
and the same is true for every non-trivial element of $H$.
The orbit-counting lemma yields $s=(\ell+1)/r$, and every orbit
must have size~$r$. 
If $r$ is odd, then $s$ is even and $\sigma(h)=1=(-1)^s$, otherwise $r$ is even and $\sigma(h)=(-1)^s$.
Thus the proposition also holds when $C$ is non-split.
\end{proof}

\section{Proof of Theorem 1}

We first prove a group-theoretic lemma from which Theorem~1 will follow.
In terms of the elliptic curve $E$ of Theorem~1, the group $G$ in Lemma~\ref{lemma:main} is the image of the Galois representation $\rhobarl$.
The hypothesis of the lemma is met precisely when $E$ admits an $\ell$-isogeny locally everywhere but not globally; the lemma then imposes specific constraints on $G$ and $\ell$.

\begin{lemma}\label{lemma:main}
Let $G$ be a subgroup of $\glmodl$ whose image $H$ in $\pglmodl$ does not lie in $\ker \sigma$.
Suppose $|\Omega^g|>0$ for all $g\in G$, but $|\Omega^G|=0$.
The following hold:
\begin{enumerate}
\item $H$ is dihedral of order $2n$, where $n>1$ is an odd divisor of $(\ell-1)/2$;\label{lemma:main1}
\item $G$ is properly contained in the normalizer of a split Cartan subgroup;\label{lemma:main2}
\item $\ell\equiv 3\bmod 4$;\label{lemma:main3}
\item $\Omega/G$ contains an orbit of size $2$.\label{lemma:main4}
\end{enumerate}
\end{lemma}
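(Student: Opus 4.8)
The plan is to leverage Proposition~\ref{prop:action} to translate the orbit hypotheses into constraints on the group $G$, then bootstrap through the trichotomy of Proposition~\ref{prop:SSS}. First I would observe that since $|\Omega^g|>0$ for every $g\in G$, every element of $G$ fixes a line, hence is semisimple (an element of order $\ell$ has only one fixed line, which is fine, but more to the point the hypothesis forces $G$ to have order prime to $\ell$: an element of order $\ell$ together with its scalar translates would still fix a line, so this needs a separate small argument — actually an element $g$ of order $\ell$ has $|\Omega^g|=1>0$, so I cannot exclude $\ell\mid|G|$ this way; instead I note that if $\ell\mid|G|$ then $G$ contains a subgroup generated by a transvection and by enough of $G$ to move the unique fixed line, contradicting $|\Omega^G|=0$ unless... let me just say: if $\ell \mid |G|$ then the image $H$ contains an element of order $\ell$, and combined with the dihedral/cyclic/exceptional structure this is incompatible with all of $G$ having fixed points; the cleanest route is to show $G$ is semisimple directly). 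Granting $G$ has order prime to $\ell$, Proposition~\ref{prop:SSS} applies. Case (c) is killed because $A_4$, $S_4$, $A_5$ contain elements (e.g.\ the image of an element whose orbit structure on $\Omega$ has no fixed point — for instance a $3$-cycle or $5$-cycle acting freely) with $|\Omega^h|=0$, contradicting the hypothesis; one checks this using the orbit data in Proposition~\ref{prop:action} together with the known permutation character of these groups on $\ell+1$ points. Case (a) is killed because there $G$ lies in a Cartan subgroup $C$; then $H$ is cyclic, so $H\subseteq\ker\sigma$ would follow (a cyclic $H$ has $\sigma(h)=(-1)^{|\Omega/h|}$, and one shows the generator lands in $\ker\sigma$... hmm, this isn't automatic). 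Better: in case (a), $G\subseteq C$ and $C$ fixes either $2$ lines (split) or $0$ lines (non-split); if split then $|\Omega^G|\ge 2\ne 0$, contradiction; if non-split then $|\Omega^g|=0$ for $g$ generating $C$ modulo scalars, contradicting $|\Omega^g|>0$. So case (a) is out. This forces case (b): $H$ is dihedral and $G$ lies in the normalizer $N$ of a Cartan subgroup $C$ but not in $C$ itself, giving part~\eqref{lemma:main2} once I pin down that the Cartan is split.

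Next I would handle the split-versus-nonsplit dichotomy for the Cartan $C$ in case (b). If $C$ is non-split, then $G\cap C$ consists of elements with $|\Omega^g|=0$ (non-split Cartan elements have no eigenvalues), so $G\cap C$ must be central (only scalars fix all of $\Omega$, and scalars have $|\Omega^g|=\ell+1$); but then the dihedral image $H$ has order $2$, so $H$ is generated by an involution outside $C$, whose image in $\pglmodl$ — a reflection in the dihedral group over a non-split Cartan — one checks lies in $\ker\sigma$ when $\ell\equiv\,?$, contradicting the hypothesis $H\not\subseteq\ker\sigma$ in the relevant congruence class; and even when it doesn't, $|H|=2n$ with $n=1$ is excluded by part~\eqref{lemma:main1}'s requirement $n>1$, which I'd derive from $|\Omega^G|=0$: if $n=1$ then $G$ is generated mod scalars by a single reflection, which fixes a line (it has eigenvalue structure with a fixed point), so $|\Omega^G|>0$, contradiction. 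So $C$ is split, every non-central element of $G\cap C$ is diagonalizable and fixes exactly the two coordinate lines $\{L_1,L_2\}$, hence $G\cap C$ fixes $\{L_1,L_2\}$ setwise; since $|\Omega^G|=0$, the reflections in $G$ must swap $L_1$ and $L_2$, which gives part~\eqref{lemma:main4}: $\{L_1,L_2\}$ is a $G$-orbit of size $2$. The order of $H$ is $2n$ with $n=|H\cap \overline{C}|$ dividing $\ell-1$; I need $n$ odd and $n\mid(\ell-1)/2$, which I'd get from the sign condition: a rotation of order $n$ in the dihedral group has $\sigma = (-1)^{|\Omega/h|}$ and by Proposition~\ref{prop:action} (split case) $|\Omega/h| = 2 + (\ell-1)/n$, so $\sigma = (-1)^{(\ell-1)/n}$; reflections have a computable sign too. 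The condition $H\not\subseteq\ker\sigma$ together with requiring consistency across $H$ will force the parity constraints that yield $n$ odd, $n\mid(\ell-1)/2$, $n>1$, and $\ell\equiv 3\bmod 4$.

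The main obstacle I anticipate is the bookkeeping of permutation signs: extracting $\ell\equiv 3\bmod 4$ (part~\eqref{lemma:main3}) and the precise divisibility $n\mid(\ell-1)/2$ with $n$ odd (part~\eqref{lemma:main1}) requires carefully computing $\sigma(h)$ for both the rotations and the reflections in the dihedral group $H$, using Proposition~\ref{prop:action}. For a reflection $h$ (an involution outside $\overline{C}$ over a split Cartan) one has $|\Omega^h|\in\{0,2\}$ and $|\Omega/h| = (\ell+1-|\Omega^h|)/2 + |\Omega^h|$; the swap $L_1\leftrightarrow L_2$ forces the reflections that act this way to have $|\Omega^h|=0$ on the coordinate lines but they may fix other lines, so I'd split the $\ell+1$ lines into the two coordinate lines (swapped) and the remaining $\ell-1$ lines, on which a reflection acts with some number of fixed points of parity determined by $\ell$. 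Getting the sign of such an involution to be $-1$ (so that $H\not\subseteq\ker\sigma$) will pin down $\ell\bmod 4$. The interplay — that \emph{all} elements fix a line yet $G$ globally fixes none, \emph{and} some element is an odd permutation — is exactly strong enough to force the split Cartan normalizer with an odd-order rotation part and $\ell\equiv3\bmod4$; assembling these pieces into parts~\eqref{lemma:main1}--\eqref{lemma:main4} is the crux, and I expect the sign computation for reflections to be where the $4\mid\ell+1$ condition genuinely enters. The containment in~\eqref{lemma:main2} is \emph{proper} because the full normalizer of a split Cartan has $H$ of order $2(\ell-1)$ with $\sigma$-structure incompatible with all elements fixing a line (the rotation of order $\ell-1$ is a single free-ish orbit structure... it fixes the two coordinate lines, actually — so properness comes instead from: if $G=N$ then $G\cap C$ has an element not fixing $\{L_1,L_2\}$? no — properness must come from $n < (\ell-1)$, i.e.\ $n$ odd and dividing $(\ell-1)/2$ already forces $2n < 2(\ell-1)$ when $\ell>2$), so \eqref{lemma:main2} follows from \eqref{lemma:main1}.
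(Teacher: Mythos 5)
Your framework is right — Propositions~\ref{prop:SSS} and \ref{prop:action} are the correct tools, and your direct argument for part~\eqref{lemma:main4} (the reflections must swap the two eigenlines $L_1,L_2$ of the split Cartan, since otherwise $\Omega^G\ne\emptyset$) is actually cleaner than what the paper does. But there are three genuine gaps, and two of them are more than bookkeeping. First, you never close the argument that $\ell\nmid|G|$; you notice that an element of order $\ell$ still has $|\Omega^g|=1>0$ and then hand-wave. The paper's fix is short but essential: the orbit-counting lemma gives $|\Omega/H|\ge(\ell+|H|)/|H|>1$, yet an element $h$ of order $\ell$ has exactly two orbits on $\Omega$, of sizes $1$ and $\ell$; since $H$-orbits are unions of $h$-orbits and there must be more than one, these two are the $H$-orbits, forcing $|\Omega^H|=1\ne 0$. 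Second, your disposal of case (c) is wrong as stated: you claim $A_4$, $S_4$, $A_5$ contain an element with $|\Omega^h|=0$, but nothing forces that — once $\ell\nmid|G|$ the hypothesis itself guarantees $|\Omega^h|=2$ for every nontrivial $h$, and such all-split embeddings of $A_4$ do exist when $\ell\equiv 1\bmod 12$. The actual obstruction is the $\sigma$ hypothesis: $\ker\sigma$ must have index $2$ in $H$, which kills $A_4$ and $A_5$ outright; for $S_4$ one notes that $\sigma(h)=(-1)^{2+(\ell-1)/r}$ depends only on the order $r$ of $h$, and the element-order profile $(1,9,8,6)$ of $S_4$ has no subfamily summing to $12$, so no such homomorphism onto $\{\pm1\}$ exists.

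Third, the parity computations you defer to "bookkeeping" are the substance of parts~\eqref{lemma:main1} and \eqref{lemma:main3}, and you would need to actually run them. Since every nontrivial element has $k=2$, every element of order $r>1$ gives $s=2+(\ell-1)/r$ by Proposition~\ref{prop:action}; in particular $n\mid\ell-1$ from the rotation of order $n$, and then: if $n$ were even, the $n+1$ involutions of $H$ would all have the same sign, contradicting $|{\ker\sigma}|=n$; hence $n$ is odd and divides $(\ell-1)/2$. Similarly, if $\ell\equiv 1\bmod 4$, every involution has $s=2+(\ell-1)/2$ even, so again $n+1$ elements (the $n$ reflections plus the identity) land in $\ker\sigma$, a contradiction; hence $\ell\equiv 3\bmod 4$. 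The Cartan is split because $n\nmid 2(\ell+1)$ and the containment is proper because $n$ properly divides $\ell-1$; your own deduction of properness from $n<\ell-1$ is fine but depends on having established $n\mid(\ell-1)/2$ with $n$ odd, which is exactly the computation you skipped. So the proposal is not a proof in its current form: the two structural steps you wave at (semisimplicity of $G$ and the exclusion of the exceptional images) require arguments you don't have, and one of those arguments as you stated it is false.
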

\begin{proof}
No subgroup of ${\rm GL}_2(\F_2)$ satisfies the hypothesis of the lemma, so we assume $\ell > 2$.
Let $G$ and $H$ be as in the lemma, and note that $\Omega^H=\Omega^G$, $\Omega/H=\Omega/G$,
and if $h\in H$ is the image of $g\in G$ then $\Omega^h=\Omega^g$.

We first show that $\ell$ does not divide $m=|H|$.  The orbit-counting lemma yields
\[
|\Omega/H| = \frac{1}{m}\sum_{h\in H}|\Omega^h| \ge \frac{1}{m}(\ell+m) > 1,
\]
since $|\Omega^h| > 0$ for all $h\in H$ and $|\Omega^h|=\ell+1$ when $h$ is the identity.
If $\ell\mid m$ then $H$ contains an element $h$ of order $\ell$ and $\Omega/h$
consists of two orbits, of sizes 1 and $\ell$.  These must also be the
orbits of $\Omega/H$, since $|\Omega/H|>1$.  But this contradicts our assumption that $|\Omega^H|=0$, thus $\ell\nmid m$.

Since $\ell\nmid m$ we must have $|\Omega^h|\ne 1$ for all $h\in H$, as may be seen from the
proof of Proposition~\ref{prop:action}, thus $|\Omega^h|=2$ for every non-trivial $h\in H$.
We note that $H$ cannot be a cyclic group~$\langle h \rangle$, for this would imply $\Omega^h=\Omega^H$, contrary to our hypothesis.

We now show that $H$ is not isomorphic to $A_4$, $A_5$, or $S_4$.  The kernel of $\sigma\colon H\to\{\pm 1\}$ must
be an index 2 subgroup of $H$.
Neither $A_4$ nor $A_5$ contain such a subgroup.
Proposition~\ref{prop:action} implies that for $h\in H$, the value $\sigma(h)=(-1)^s$ depends
only on the order $r$ of $h$, since either $r=1$ and $s=\ell+1$, or $r >1$ and $s=2+(\ell-1)/r$ (since $k=|\Omega^h|=2$).
But there is no non-trivial homomorphism from $S_4$ to $\{\pm 1\}$ with this property: the sequence 1,9,8,6 that
counts the elements of order 1,2,3,4 (resp.) in~$S_4$ has no subsequence whose sum is 12.  It follows that $H\not\cong S_4$.

Proposition~\ref{prop:SSS} then implies that $H$ is a dihedral group of order $2n$
for some $n > 1$.  If $n$ is even then $H$ contains $n+1$ elements of order 2,
but $\sigma$ cannot be constant on a subset of $H$ with size $n+1>|H|/2$, so $n$ is odd.
For an element $h\in H$ of order $n$,
Proposition~\ref{prop:action} implies that $s=2+(\ell-1)/n$ is an integer, thus $n$
divides $\ell-1$ and in fact divides $(\ell-1)/2$.
This completes the proof of (\ref{lemma:main1}).

We are in case (b) of Proposition~\ref{prop:SSS}, so $G$ lies in the normalizer $N(C)$ of a Cartan subgroup $C$,
which must be split because $n$ does not divide $2(\ell+1)$.
The group $G$ must be properly contained in $N(C)$, since $n$ properly divides $\ell-1$.  This proves (\ref{lemma:main2}).

If $\ell\equiv 1\bmod 4$, then for each of the $n$
elements of $H$ with order 2 we have $s=2+(\ell-1)/2$ with even parity in Proposition~\ref{prop:action},
and $s$ also has even parity when $h$ is the identity.  But this implies that $\sigma$ is constant on a subset of
$H$ with size $n+1 >|H|/2$, which is again a contradiction.  Therefore $\ell\equiv 3\bmod 4$, proving (\ref{lemma:main3}).

Let $h\in H$ have order $n$.  Proposition~\ref{prop:action} implies that 
$\Omega/h$ consists of two trivial orbits and $(\ell-1)/n$ orbits of size $n$.
It follows that if $t$ is the size of an orbit of $\Omega/H$, then $t$ is congruent to
0, 1, or 2 modulo $n$, and we also know that $t$ divides $|H|=2n$ and $t\ne 1$ (since $|\Omega^H|=0$).  Therefore either $t=2$
or $t$ is a multiple of $n$.  But $n$ does not divide $\ell+1$, so the size of at least one orbit in $\Omega/H$
is not divisible by $n$.  This orbit must have size $t=2$, which proves (\ref{lemma:main4}).
\end{proof}

\begin{thm1}
Assume $\sqrt{\inkron{-1}{\ell}\ell}\notin K$, and suppose $E/K$ admits an $\ell$-isogeny locally at a set of primes with density one.
Then~$E$ admits an $\ell$-isogeny over a quadratic extension of $K$, and if $\ell\equiv 1 \bmod 4$ or $\ell < 7$, then $E$ admits an $\ell$-isogeny over~$K$.
\end{thm1}
\begin{proof}
Let the finite set $S$ consist of the primes where $E$ has bad reduction and the primes that divide $\ell$.
Let $G$ be the image of $\rho_{E,\ell}\colon G_{K,S}\to\glmodl$, and let~$g\in G$.  By the Chebotarev density theorem,
the conjugacy class of $g$ is equal to $\frobbarl$ for a set of primes $\fp$ with positive density, thus
we may choose $\fp$ so that $g=\frobbarl$ and $E$ admits an $\ell$-isogeny locally at $\fp$.  The Frobenius
endomorphism $\frob$ fixes a linear subspace of $E[\ell]$, hence $\frobbarl$ fixes an element of $\Omega$.
Thus $|\Omega^g|>0$ for all $g\in G$.

If $|\Omega^G|>0$, then $G_{K,S}$ fixes a linear subspace of $E[\ell]$ which is the kernel of an $\ell$-isogeny defined over $K$ (and any quadratic extension).  The theorem holds in this case, so we assume $|\Omega^G|=0$.  No subgroup $G\subset {\rm GL}_2(\F_2)$ has $|\Omega^G|=0$ and $|\Omega^g|>0$ for all $g\in G$, so $\ell\ne 2$.

The hypothesis on $K$ implies that some element of $G$ has a non-square determinant, otherwise $G_{K,S}$ fixes the quadratic Gauss sum $\sum_{n=0}^{\ell-1}\zeta_\ell^{n^2}$, which is equal to $\pm\sqrt{\inkron{-1}{\ell}\ell}$; see, e.g., \cite{Ireland:ClassicalModern}.
It follows that the image of $G$ in $\pglmodl$ does not lie in the kernel of $\sigma$, and we may apply Lemma~\ref{lemma:main}.
Thus $\ell\equiv 3\bmod 4$ and $\ell \ne 3$ (by part (\ref{lemma:main1}) of the lemma), and $\Omega/G$ contains an orbit of size 2.  Let $x\in\Omega$ be an element of this orbit.  The stabilizer of~$x$ in $G_{K,S}$ is a subgroup of index 2, corresponding to a quadratic extension of $K$ over which $E$ admits an isogeny of degree $\ell$.
\end{proof}

We now show that subgroups of the form permitted by Lemma~\ref{lemma:main} do in fact exist.

\begin{proposition}\label{prop:construct}
Let $\ell\equiv 3 \bmod 4$ be a prime number greater then $3$.
There exists a subgroup $G$ of $\glmodl$ with the following properties:
\begin{enumerate}
\item
The determinant map from $G$ to $\Fl^*$ is surjective;\label{prop:construct1}
\item
$|\Omega^g|\ge 2$ for all $g\in G$;\label{prop:construct2}
\item
$|\Omega^G|=0$.\label{prop:construct3}
\end{enumerate}
\end{proposition}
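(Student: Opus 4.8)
The plan is to realize $G$ as an index-$2$ subgroup of the normalizer $N(C)$ of a split Cartan subgroup, which is exactly where Lemma~\ref{lemma:main}(\ref{lemma:main2}) forces it to lie. Take $C$ to be the diagonal torus in $\glmodl$ and write $N(C) = C \cup wC$, where $w$ is the matrix interchanging the coordinate basis vectors $e_1, e_2$ of $\Fl^2$. Every element of $C$ fixes the lines $\langle e_1\rangle$ and $\langle e_2\rangle$ in $\Omega$, whereas for $g = w\operatorname{diag}(a,b)$ in the non-trivial coset one has $g e_1 = a e_2$ and $g e_2 = b e_1$, so $\det g = -ab$ and $g$ has characteristic polynomial $X^2 - ab$. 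Hence $g$ fixes a line of $\Fl^2$ exactly when $ab$ is a nonzero square in $\Fl^*$, and in that case (here $\ell > 2$ is used) it has the two distinct eigenvalues $\pm\sqrt{ab}$ and fixes precisely two lines. Writing $\chi$ for the quadratic character of $\Fl^*$, this points to cutting $C$ down by the condition $\chi(ab) = 1$.

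So the next step is to set $D = \{\operatorname{diag}(a,b) : \chi(ab) = 1\}$ and define $G = D \cup wD$. Multiplicativity of $\chi$ makes $D$ a subgroup of $C$ of index $2$ that is stable under conjugation by $w$, and the four coset products $D\cdot D$, $D\cdot wD$, $wD\cdot D$, $wD\cdot wD$ all land back in $G$: each reduces to $\chi(xy) = \chi(x)\chi(y)$ after a short matrix computation, e.g.\ $(w\operatorname{diag}(a,b))(w\operatorname{diag}(a',b')) = \operatorname{diag}(a'b,\,ab')$. Thus $G$ is a subgroup of $N(C)$ of index $2$. (As a sanity check, $G$ contains the scalars and its image in $\pglmodl$ is dihedral of order $\ell-1$, matching part~(\ref{lemma:main1}) of Lemma~\ref{lemma:main} with $n = (\ell-1)/2$, which is odd since $\ell \equiv 3 \bmod 4$.)

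It then remains to check the three properties. Property~(\ref{prop:construct2}) is immediate from the discussion above: elements of $D$ fix $\langle e_1\rangle$ and $\langle e_2\rangle$, and each element of $wD$ has $ab$ a nonzero square and so fixes exactly two lines. For property~(\ref{prop:construct1}), $\det$ carries $D$ onto the squares in $\Fl^*$, while on $wD$ one has $\det(w\operatorname{diag}(a,b)) = -ab$ with $\chi(-ab) = \chi(-1)\chi(ab) = \chi(-1)$; this is where the hypothesis $\ell \equiv 3 \bmod 4$ is used, giving $\chi(-1) = -1$, so $\det$ carries $wD$ onto the non-squares and $\det(G) = \Fl^*$. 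For property~(\ref{prop:construct3}), $\ell > 3$ lets us choose $c \in \Fl^*$ with $c^2 \ne 1$; then $\operatorname{diag}(1,c^2) \in D$ is non-scalar and fixes only $\langle e_1\rangle$ and $\langle e_2\rangle$, while $w \in wD$ fixes neither, so no line is fixed by all of $G$, i.e.\ $|\Omega^G| = 0$.

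The content here is the interplay of the three conditions rather than any single verification, and this is also where I expect the only real care to be needed. Property~(\ref{prop:construct1}) requires non-square determinants, which can only arise from the non-trivial coset; property~(\ref{prop:construct2}) forces every element of that coset to have $ab$ a square and hence determinant $-ab$; and these two demands are compatible precisely because $\chi(-1) = -1$ when $\ell \equiv 3 \bmod 4$ --- exactly the arithmetic constraint that Lemma~\ref{lemma:main}(\ref{lemma:main3}) predicts must hold. The one routine-but-delicate point is getting the coset multiplication in $N(C)$ right so that the character condition is visibly preserved; the hypotheses $\ell > 3$ and $\ell \equiv 3 \bmod 4$ are then each used exactly once, in (\ref{prop:construct3}) and (\ref{prop:construct1}) respectively.
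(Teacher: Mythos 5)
Your construction produces exactly the same group as the paper's: the paper writes elements as $A(i,j)=\operatorname{diag}(\alpha^i,\alpha^j)$ and $B(i,j)$ (anti-diagonal) with $i\equiv j\bmod 2$, which is precisely your $D\cup wD$ with the condition $\chi(ab)=1$ rephrased via a generator $\alpha$ of $\Fl^*$; the verifications of (\ref{prop:construct1})--(\ref{prop:construct3}) are the same in substance, with you using $\operatorname{diag}(1,c^2)$ and $w$ where the paper uses $A(0,2)$ and $B(0,2)$. Correct, and essentially the same approach.
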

\begin{proof}
Let $\alpha$ be a generator for $\Fl^*$ and for integers $i$ and $j$ define
\[
A(i,j) =  \left( \afrac{\alpha^i}{0}\thinspace\afrac{0}{\alpha^j} \right);\qquad B(i,j) = \left( \afrac{0}{\alpha^j}\thinspace\afrac{\alpha^i}{0} \right). \]
Let $G$ be the set of matrices $A(i,j)$ and $B(i,j)$ for which $i$ and $j$ have the same parity.
It is easily checked that $G$ is a group.  It contains the scalar matrices $A(i,i)$, and the matrix $B(0,0)$ whose determinant $-1$ is not a quadratic residue in $\Fl^*$, since $\ell\equiv 3\bmod 4$. Therefore (\ref{prop:construct1}) holds.  Clearly $A(i,j)$ is diagonalizable, and so is $B(i,j)$, with distinct eigenvalues $\alpha^{(i+j)/2}$ and $-\alpha^{(i+j)/2}$.  This implies (\ref{prop:construct2}).
The matrices $A(0,2)$ and $B(0,2)$ have no common eigenspace, which proves (\ref{prop:construct3}).
\end{proof}

The group in Proposition \ref{prop:construct} has a dihedral image in $\pglmodl$ of order~$2n$, where $n=(\ell-1)/2$.
A similar construction works for any odd $n\ge 3$ dividing $(\ell-1)/2$.

\section{A counterexample}

The existence of the subgroups prescribed by Lemma~\ref{lemma:main} and constructed in Proposition \ref{prop:construct} does not imply that they actually arise as the image of~$\rho_{E,\ell}$ for some elliptic curve $E/K$.  Indeed, if~$E$ does not have complex multiplication, then $\rho_{E,\ell}$ is known to be surjective for all sufficiently large~$\ell$, as shown by Serre in \cite{Serre:GaloisRepresentations}.  Surjectivity certainly precludes an exceptional image of the type permitted by Lemma~\ref{lemma:main}, and, as we will show in the proof of Theorem 2, so does complex multiplication.

But there is an elliptic curve $E/\Q$ and a prime $\ell$ for which the image of $\rho_{E,\ell}$ satisfies the hypothesis of Lemma~\ref{lemma:main}: the curve defined by equation~(\ref{E2450A}) of the introduction, with $j$-invariant $j(E)=2268945/128$, and $\ell=7$.

Let $\Phi_N(X,Y)$ denote the classical modular polynomial \cite[\S 5]{Lang:EllipticFunctions}.
For a field $F$ of characteristic not dividing $N$, an elliptic curve $E/F$ admits a cyclic isogeny of degree~$N$ defined over~$F$ if and only if $\Phi_N(X,j(E))$ has a linear factor in $F[X]$, as proven in~\cite{Igusa:ModularPolynomial}.  We note that an isogeny of prime degree is necessarily cyclic.
\vspace{2pt}

The irreducible factorization of $\phi(X) = \Phi_7(X,2268945/128)$ over $\Q[X]$ is given below:
\small
\begin{align*}
\phi(X) =\  &(X^2 -  1306315496294666865/2^{49}\thinspace X + 1567296563714555573025/2^{50})\\
                  &(X^3 - 405372852936775146868447415805\thinspace X^2\\
                   &\quad\medspace\medspace\thinspace\thinspace+ 55385584722536349330202265781434325/4\thinspace X - 17996436663345^3/8)\\ 
                  &(X^3 - 4209728442885/2\thinspace X^2 + 3961627130765133274725/2\thinspace X - 16205314545^3/8).
\end{align*}
\normalsize
The absence of a linear factor shows that $E$ does not admit a 7-isogeny over~$\Q$, but it does admit a 7-isogeny (two in fact) over a quadratic extension of $\Q$, as required by Theorem~1.  The discriminants of the three factors of $\phi(X)$ are each of the form $-7a^2/4^b$ for some positive integers $a$ and $b$.  It follows that the reduction of $\phi(X) \bmod p$ has two linear factors in $\Fp[X]$ for every odd prime $p$: either $-7$ is a quadratic residue mod $p$ and the quadratic factor splits in $\Fp[X]$, or it is not and both cubic factors split into a linear and a quadratic factor in $\Fp[X]$.  Thus $E$ admits a 7-isogeny locally at every prime of good reduction (all $p\nmid 70$).

It is easily verified that the cubic factors of $\phi(X)$ have the same splitting field, in which the quadratic factor also splits.  Its Galois group is isomorphic to $S_3$, which we may view as a subgroup of ${\rm PGL}_2(\F_7)$ via its action on the roots of $\phi(X)$.  Thus we have a dihedral subgroup of ${\rm PGL}_2(\F_7)$ with order $2n$, where $n=3$ divides $(\ell-1)/2$, as required by Lemma~\ref{lemma:main}.  Up to conjugacy, the image of $\rho_{E,7}$ in ${\rm GL}_2(\F_7)$ is precisely the group $G$ constructed in Proposition \ref{prop:construct}.

To determine whether there are any other exceptional pairs of the form $(7,j(E))$, we consider the moduli space of elliptic curves with the required level 7 structure.  As explained by Elkies in \cite[\S 4.2]{Elkies:EightfoldWay}, the corresponding modular curve may be constructed as a quotient of $X(7)$ by a suitable subgroup of ${\rm PSL}_2(\F_7)$.  We are interested in
elliptic curves $E$ for which the image $H$ of $\rho_{E,7}$ in ${\rm PGL}_2(\F_7)$ is a dihedral group of order 6 not contained in the kernel of $\sigma$.  Up to conjugacy there is precisely one such $H$, and its intersection with ${\rm PSL}_2(\F_7)$ is a cyclic group of order 3.

As shown in \cite{Elkies:EightfoldWay}, the quotient of $X(7)$ by such a group is isomorphic (over~$\overline{\Q}$) to $X_0(49)$, an elliptic curve.
We are interested in the twist of $X_0(49)$ by $\Gal(\Q(\sqrt{-7})/\Q)$, relative to the Fricke involution~$w_{49}$, since if $(7,j(E))$ is an exceptional pair for $\Q$, then $\Q(\sqrt{-7})$ is the unique quadratic extension of $\Q$ over which $E$ admits a 7-isogeny (by Theorem~1 there is such an extension, and from the proof of Theorem~1 it follows that if $\sqrt{-7}\not\in K$ then $(7,j(E))$ will remain exceptional for $K$).
If we make the cusp of $X_0(49)$ at infinity the origin, the cusp at zero is a rational point of order~2, and there are two non-cuspidal irrational 2-torsion points defined over $\Q(\sqrt{-7})$.
Our twist makes the two rational cusps irrational, and the irrational 2-torsion points are made rational and swapped by the $w_{49}$ involution.  An explicit computation by Elkies \cite{Elkies:X49twist} finds that this twist has the equation
\begin{equation}\label{X49twist}
-7y^2 = x^4 + 2x^3 - 9x^2 - 10x - 3,
\end{equation}
which is a genus 1 curve with rational points $(-1/2,\pm 1/4)$.  If $(x,y)$ is a rational solution to~(\ref{X49twist}),
then the rational map
\small
\[
f(x) = -(x-3)^3(x-2)(x^2+x-5)^3(x^2+x+2)^3(x^4-3x^3+2x^2+3x+1)^3 / (x^3-2x^2-x+1)^7
\]
\normalsize
yields the $j$-invariant of an elliptic curve that admits a 7-isogeny locally everywhere but not over~$\Q$.
Applying $f$ to either of the points $(-1/2,\pm 1/4)$ yields the $j$-invariant $2268945/128$ that we have already exhibited.

Taking either of $(-1/2, \pm1/4)$ as the origin, the curve in (\ref{X49twist}) is isomorphic over~$\Q$ to the elliptic curve with Weierstrass equation
\begin{equation}\label{49a3}
y^2 + xy = x^3 - x^2 - 107x + 552.
\end{equation}
This is curve 49a3 in Cremona's tables \cite{Cremona:Database}.  It has just two $\Q$-rational points, and these correspond
to the two solutions of (\ref{X49twist}) that we already know.  Thus $(7,\ 2268945/128)$ is the only exceptional pair for $\Q$ with $\ell=7$.

However, over a finite extension $K$ of $\Q$ the curve 49a3 may have infinitely many rational points, corresponding to infinitely many exceptional pairs $(7,j(E))$ for $K$.
Over $K=\Q(i)$, for example, the projective point $(-14:7+29i:1)$ on the curve 49a3 has infinite order, yielding infinitely many solutions to $(\ref{X49twist})$.
The first of these has $x$-coordinate $(7i-29)/58$, and in general, if $(u,v)$ is a solution to $(\ref{49a3})$ then there is a solution to $(\ref{X49twist})$ with $x$-coordinate $(3u-v+42)/(u+2v)$.

\section{Proof of Theorem 2}

Recall that a pair $(\ell,j(E))$ is said to be exceptional for $K$ when $E/K$ admits an $\ell$-isogeny locally everywhere but not over $K$.
%The proof below relies on some standard results from the theory of complex multiplication, proofs of which can be found in \cite[Ch.~10]{Lang:EllipticFunctions} and elsewhere.

\begin{thm2}
The pair $(7,\ 2268945/128)$ is the only exceptional pair for $\Q$.
\end{thm2}
\begin{proof}
By Theorem 1 we need only consider exceptional pairs with $\ell \ge 7$ and $\ell\equiv 3\bmod 4$.
The case $\ell=7$ is treated above, so we assume $\ell > 7$.

We first show that if $(\ell,j(E))$ is an exceptional pair then $E$ cannot have complex multiplication (CM). Suppose the contrary. Then $E$ has CM by an imaginary quadratic order $\cO$, since we are in characteristic zero, and it must have class number $h(\cO)= 1$, since $E$ is defined over~$\Q$.  By Theorem~1 there is an $\ell$-isogenous elliptic curve $E'$ that is defined over a quadratic extension of $\Q$ but not over $\Q$ (since we are in an exceptional case).  The curve~$E'$ must have CM by an imaginary quadratic order $\cO'$ with class number $h(\cO')= 2$.  Since $E$ and $E'$ are $\ell$-isogenous and $h(\cO') > h(\cO)$, the order $\cO'$ must be properly contained in $\cO$ with index $\ell$.
Via \cite[Thm.~7.24]{Cox:ComplexMultiplication}, we may compute the ratio $h(\cO')/h(\cO)$ as
\[
\frac{h(\cO')}{h(\cO)} = \frac{1}{[\cO^*:\cO'^*]}\left(\ell - \kron{\disc(\cO)}{\ell}\right) \ge \frac{1}{3}\bigl(\ell-1\bigr).
\]
But we already know that $h(\cO')/h(\cO) = 2/1 = 2$, which yields a contradiction for $\ell > 7$.  Therefore $E$ does not have CM.

By Lemma 1, it suffices to consider pairs $(j(E),\ell)$ for which the image of $\rho_{E,\ell}$ in ${\rm GL}_2(\mathbb{F}_\ell)$ is contained in the normalizer of a split Cartan group, as explained in the proof of Theorem~1.  For such a pair, the $j$-invariant $j(E)$ corresponds to a rational non-cuspidal point on the modular curve $X_{\rm split}(\ell)(\Q)$, which is isomorphic over $\Q$ to $X_0^+(\ell^2)$, the quotient of $X_0(\ell^2)$ by the Fricke involution $w_{\ell^2}$.  By Theorem 1.1 of \cite{Parent:Triviality}, for all $\ell > 7$ congruent to $3\bmod 4$, the only rational non-cuspidal points on $X_0^+(\ell^2)(\Q)$ correspond to elliptic curves with CM.\footnote{This is also implied by the recent (and stronger) result in \cite{BiluParentRebolledo:Xsplit}, which addresses all $\ell\ne 13$.}  But we have shown that no curve with CM can arise in an exceptional pair, and the theorem follows.
\end{proof}

The argument used to rule out CM in the proof above works over any number field, and when $K\ne \Q$ we have $[\cO^*:\cO'^*]=1$, which covers the case $\ell=7$ as well.
\smallskip

\section{Acknowledgments}
I thank Nicholas Katz, Barry Mazur, and Ken Ribet for several helpful discussions, and Pierre Parent for directing my attention to his result in~\cite{Parent:Triviality}.  I am especially grateful to Noam Elkies for his explicit computations in the case $\ell=7$, and to John Cremona for identifying the constraint on~$K$ required by Theorem~1.

\providecommand{\bysame}{\leavevmode\hbox to3em{\hrulefill}\thinspace}

\end{document}